\newcommand{\bx}{\mbox{\boldmath{$x$}}}
\newcommand{\btau}{\mbox{\boldmath{$\tau$}}}
\newcommand{\bzero}{\mbox{\boldmath{$0$}}}
\newcommand{\fb}{\mbox{\boldmath{$f$}}}
\newcommand{\bn}{\mbox{\boldmath{$n$}}}
\newcommand{\bu}{\mbox{\boldmath{$u$}}}
\newcommand{\bv}{\mbox{\boldmath{$v$}}}
\newcommand{\bvarepsilon}{\mbox{\boldmath{$\varepsilon$}}}
\newcommand{\bsigma}{\mbox{\boldmath{$\sigma$}}}
\newcommand{\bnu}{\mbox{\boldmath{$\nu$}}}
\newtheorem{theorem}{Theorem}
\newtheorem{lemma}[theorem]{Lemma}
\newtheorem{problem}[theorem]{Problem}
\numberwithin{equation}{section}
\newenvironment{proof}[1][Proof]{\textbf{#1.} }
{\ \rule{0.75em}{0.75em}\smallskip}
\begin{document}

\begin{center}
\Large\bf Numerical Analysis of a Contact Problem with Wear
\end{center}

\begin{center}
Danfu Han\footnote{Department of Mathematics, Hangzhou Normal University, Hangzhou, China. 
Email: {\tt mhdf@hznu.edu.cn}},\quad 
Weimin Han\footnote{Program in Applied Mathematical and Computational Sciences (AMCS) \&
Department of Mathematics, University of Iowa, Iowa City, IA 52242, USA.
Email: {\tt weimin-han@uiowa.edu}},\quad 
Michal Jureczka\footnote{Jagiellonian University in Krakow, Faculty of Mathematics and Computer Science, 
Lojasiewicza 6, 30-348 Krakow, Poland. Email: {\tt michal.jureczka@uj.edu.pl}}\quad and \quad 
Anna Ochal\footnote{Jagiellonian University in Krakow, Faculty of Mathematics and Computer Science, 
Lojasiewicza 6, 30-348 Krakow, Poland. Email: {\tt ochal@ii.uj.edu.pl}}
\end{center}

\begin{quote}
{\bf Abstract}. This paper represents a sequel to \cite{JO18} where numerical solution of a quasistatic 
contact problem is considered for an elastic body in frictional contact with a
moving foundation. The model takes into account wear of the contact surface of the body
caused by the friction. Some preliminary error analysis for a fully discrete approximation of the
contact problem was provided in \cite{JO18}.  In this paper, we consider a more general fully discrete
numerical scheme for the contact problem, derive optimal order error bounds and present computer
simulation results showing that the numerical convergence orders match the theoretical predictions.

{\bf Keywords}. Quasistatic contact problem, variational inequality, 
numerical methods, optimal order error estimate.

{\bf AMS Classification.} 65N30
\end{quote}

\section{Introduction}
\setcounter{equation}0

Contact phenomenon is common in engineering applications.  Mathematical studies and numerical analysis 
of contact problems are most suitably carried out within the frameworks of variational inequalities or 
hemivariational inequalities, which have attracted the attention of many researchers.  The related 
mathematical literature grows rapidly.  Some representative comprehensive references in this area are
\cite{DL1976, KO1988, HHNL1988, HS2002, SST2004, EJK2005, SHS2006, HR2013} in the context of variational 
inequalities and \cite{P1993, NP1995, HMP1999, MOS2013, SM2018} in the context of hemivariational inequalities.

For a contact problem, the mathematical model is constructed based on considerations of various aspects 
of the contact process.  Factors to be taken into account include the type of the contact process
(static, quasi-static, or dynamic), constitutive relations of the deformable bodies, contact 
conditions of various application-specific forms.  In certain applications, it is important
to consider heating or thermo effects (\cite{OJ18}), or piezoelectricity effects (\cite{Sz17}).  
Since the contact process inevitably causes material wear or even damage, it is not surprising that 
the wear effect has been built into mathematical models for a variety of contact processes, cf.\ 
\cite{CHS00, HSS01, RTS01, KS02, Bar06, VRS13, GOS16}.  In a recent paper \cite{SPS17}, a mathematical 
model is proposed and studied for contact with wear described by Archard's law of surface wear.
In this model, the friction between a deformable body and the foundation leads to wear of the 
contact surface of the body over time. Solution existence and uniqueness for the model are
provided in \cite{SPS17}.  Numerical approximation of the contact problem is the subject of \cite{JO18}
where some error bounds are derived for a fully discrete scheme.  In this paper, we take a further
step by considering a more general fully discrete numerical scheme for the contact problem 
that allows an arbitrary partition of the time interval, 
providing optimal order error estimates of the fully discrete scheme to solve the contact problem.
Moreover, we present numerical results showing deformation of the contact body and numerical convergence 
orders of the fully discrete solutions that confirm the theoretical error bounds.

The remainder of this paper is organized as follows. In Section \ref{sec2} we introduce the contact problem
and its variational formulation. In Section \ref{sec4}, we study a fully discrete numerical scheme
and derive optimal order error bounds. In Section \ref{sec5}, we present computational simulation results 
for numerical convergence orders that match the theoretical predictions.

\section{The contact problem and its variational formulation}\label{sec2}
\setcounter{equation}0

First, we describe the physical setting of the contact problem.  Consider a deformable body 
that occupies a domain $\Omega\subset\mathbb{R}^d$, $d = 2, 3$ in application. 
The body is subject to the action of volume forces with a total density $\fb_0$.
The boundary $\Gamma$ of the domain $\Omega$ is assumed to be Lipschitz continuous and is divided into
three disjoint measurable parts $\Gamma_D$, $\Gamma_N$ and $\Gamma_C$, with ${\rm meas}\,(\Gamma_D)>0$. 
Denote by $\bnu$ the unit outward normal vector on $\Gamma$ that is defined a.e.\ on $\Gamma$. 
The body is clamped on $\Gamma_D$, i.e., the displacement is equal to $\bzero$ on $\Gamma_D$. 
Surface transactions of a total density $\fb_N$ act on the boundary $\Gamma_N$. 
The contact boundary is $\Gamma_C$ where the contact is modeled by a normal compliance condition 
with a unilateral constraint and Coulomb's law of dry friction. Following \cite{SPS17}, we assume that 
the body is elastic, in contact with a moving obstacle (foundation) made of a hard perfectly rigid material, 
and assume that the contact surface of the body $\Gamma_C$ is covered by a layer of soft material. 
This layer is deformable and the foundation may penetrate it. Frictional contact with the foundation
may cause this layer to wear over time.

We assume that the acceleration of the body is negligible and so the problem is quasistatic.  In 
our model, the framework of the small strain theory is employed. We are interested in the body displacement 
and foundation wear in a time interval $[0, T]$, with $T > 0$.  We denote by ``$\cdot$'' and $\|\cdot\|$ 
the scalar product and the Euclidean norm in $\mathbb{R}^d$ or $\mathbb{S}^d$, respectively, where 
$\mathbb{S}^d$ is the space of symmetric matrices of order $d$.  The indices $i$ and $j$ run from $1$ to $d$ 
and the index after a comma represents the partial derivative with respect to the corresponding component 
of the independent variable. Summation convention over repeated indices is adopted. We denote the 
divergence operator by ${\rm Div}\bsigma = (\sigma_{ij,j})$ for an $\mathbb{S}^d$-valued field $\bsigma$. 
Standard Lebesgue and Sobolev spaces will be used, such as $L^2(\Omega)^d = L^2(\Omega; \mathbb{R}^d)$
and $H^1(\Omega)^d =H^1(\Omega; \mathbb{R}^d)$.  Recall that the linearized strain tensor 
of a displacement field $\bu\in H^1(\Omega)^d$ is
\[\bvarepsilon(\bu)=(\varepsilon_{ij}(\bu)),\quad\varepsilon_{ij}(\bu)=\frac{1}{2}\left(u_{i,j}+u_{j,i}\right).\]

Let $u_\nu = \bu\cdot\bnu$ and $\sigma_\nu =\bsigma\bnu\cdot\bnu$ be the normal components of 
$\bu$ and $\bsigma$, respectively, and let $\bu_\tau = \bu-u_\nu \bnu$ and
$\bsigma_\tau= \bsigma\bnu-\sigma_\nu \bnu$ be their tangential components, respectively. 
To simplify the notation, we will usually not indicate explicitly the dependence
of various functions on the spatial variable $\bx$.

Denote by $\bv^*(t)\not=\bzero$ the velocity of the foundation. Let 
\begin{equation}
\bn^*(t)=-\bv^*(t)/\|\bv^*(t)\|, \quad \alpha(t)=\kappa\,\|\bv^*(t)\|, 
\label{bn}
\end{equation}
where $\kappa$ represents the wear coefficient, and 
let $\mu$ be the friction coefficient.  The classical formulation of the contact problem with wear
is as follows.

\begin{problem}\label{P1}
Find a displacement field $\bu \colon\Omega\times [0,T]\to \mathbb{R}^d$, a stress field 
$\bsigma\colon\Omega\times [0,T]\to \mathbb{S}^d$, and a wear function
$w\colon\Gamma_C\times[0, T] \to \mathbb{R}_{+}=[0,\infty)$ such that for all $t \in [0, T]$,
\begin{align}	
\bsigma(t)={\mathcal F}\bvarepsilon(\bu(t))\quad&{\rm in}\ \Omega,\label{eq1}\\[1mm]
{\rm Div}\,\bsigma(t)+\fb_0(t)=\bzero\quad&{\rm in}\ \Omega,\label{eq2}\\[1mm]
\bu(t)=\bzero\quad &{\rm on}\ \Gamma_D,\label{eq3}\\[1mm]
\bsigma(t)\bnu=\fb_N(t)\quad&{\rm on}\ \Gamma_N,\label{eq4}\\[1mm]
\left. \begin{array}{ll}
u_\nu(t)\le g,\ \sigma_\nu(t)+p(u_\nu(t)-w(t))\le 0,\\
(u_\nu(t)-g)(\sigma_\nu(t)+p(u_\nu(t)-w(t)))=0
\end{array}\right\}  \quad&{\rm on}\ \Gamma_C,\label{eq5}\\[1mm]
-\bsigma_\tau(t)=\mu\,p(u_\nu(t)-w(t))\,\bn^*(t)\quad&{\rm on}\ \Gamma_C,\label{eq6}\\
w^\prime(t)=\alpha(t)\,p(u_\nu(t)-w(t))\quad&{\rm on}\ \Gamma_C,\label{eq7}\\
w(0)=0\quad&{\rm on}\ \Gamma_C.  \label{eq8}
\end{align}
\end{problem}

In Problem \ref{P1}, equation \eqref{eq1} represents an elastic constitutive law with an elasticity operator
${\cal F}$. Equation \eqref{eq2} is the equilibrium equation. The equality \eqref{eq3} describes the fact 
that body is clamped on $\Gamma_D$ and \eqref{eq4} represents external forces acting on $\Gamma_N$. 
The relations in \eqref{eq5} describe the damping response of the foundation, $g > 0$ being the 
thickness of a soft layer covering $\Gamma_C$. The friction is modeled by equation \eqref{eq6}. 
Here, the size of $\bv^*$ is assumed to be significantly larger than that of the 
tangential body velocity $\bu^\prime_\tau$. Equations \eqref{eq7} and \eqref{eq8} govern the
evolution of the wear function. Detailed derivation of this model is presented in \cite{SPS17}.

The contact problem will be studied in its variational formulation.  For this purpose, we introduce
function spaces and hypotheses on the problem data.  We recall that for a normed space $X$, $C([0,T];X)$
is the space of continuous functions from $[0, T]$ to $X$. We will use the following Hilbert spaces:
\[ {\cal H} = L^2(\Omega;\mathbb{S}^d),\quad V=\{\bv\in H^1(\Omega)^d\mid \bv =\bzero\ {\rm on}\ \Gamma_D\}\]
endowed with the inner scalar products
\[ (\bsigma,\btau)_{\cal H}=\int_\Omega \sigma_{ij}\tau_{ij} dx,\quad 
 (\bu,\bv)_V=(\bvarepsilon(\bu),\bvarepsilon(\bv))_{\cal H} \]
with the corresponding norms. Denote by $\langle\cdot,\cdot\rangle_{V^*\times V}$ the
duality pairing between a dual space $V^*$ and $V$.  The set of admissible displacements is
\[ U = \{\bv \in V \mid v_\nu\le g \ {\rm on}\ \Gamma_C \}.\]

For a function $\bv\in V$, we use the same symbol $\bv$ for its trace on the boundary $\Gamma$. 
By the Sobolev trace theorem, there exists a constant $c_0 > 0$ depending only on $\Omega$,
$\Gamma_D$ and $\Gamma_C$ such that
\begin{equation}
\|\bv\|_{L^2(\Gamma_C)^d}\le c_0\|\bv\|_V\quad\forall\,\bv\in V.
\label{eq9}
\end{equation}

Now we introduce the hypotheses on the data needed in the study of Problem \ref{P1}.

\noindent\underline{$H({\cal F})$}: For the elasticity operator 
${\cal F}\colon\Omega\times\mathbb{S}^d\to\mathbb{S}^d$,\\
\phantom{a}\quad (a) ${\cal F}(\cdot,\bvarepsilon)$ is measurable on $\Omega$ for all 
$\bvarepsilon\in \mathbb{S}^d$, ${\cal F}(\cdot,\bzero)\in {\cal H}$;\\
\phantom{a}\quad (b) $\exists$ $L_{\cal F} > 0$ s.t.\ $\|{\cal F}(\bx,\bvarepsilon_1)-{\cal F}(\bx,\bvarepsilon_2)\|
\le L_{\cal F}\|\bvarepsilon_1- \bvarepsilon_2\|$ $\forall\,\bvarepsilon_1,\bvarepsilon_2\in \mathbb{S}^d$, 
a.e.\ $\bx\in\Omega$;\\
\phantom{a}\quad (c) $\exists$ $m_{\cal F} > 0$ s.t.\ $({\cal F}(\bx,\bvarepsilon_1)- {\cal F}(\bx,\bvarepsilon_2)) 
\cdot(\bvarepsilon_1- \bvarepsilon_2)\ge m_{\cal F}\|\bvarepsilon_1- \bvarepsilon_2\|^2$ 
$\forall\,\bvarepsilon_1,\bvarepsilon_2\in \mathbb{S}^d$, a.e.\ $\bx\in\Omega$.

\noindent\underline{$H(p)$}: For the normal compliance function $p\colon\Gamma_C\times\mathbb{R}\to\mathbb{R}_+$,\\
\phantom{a}\quad (a) $p(\cdot, r)$ is measurable on $\Gamma_C$ $\forall\,r \in\mathbb{R}$;\\
\phantom{a}\quad (b) $\exists$ $L_p>0$ s.t.\ $|p(\bx,r_1)-p(\bx,r_2)|\le L_p|r_1-r_2|$ $\forall\,r_1,r_2\in \mathbb{R}$,
a.e.\ $\bx\in\Gamma_C$;\\
\phantom{a}\quad (c) $(p(\bx, r_1)-p(\bx, r_2))(r_1-r_2)\ge 0$ $\forall\,r_1, r_2\in \mathbb{R}$, a.e.\ $\bx\in\Gamma_C$;\\
\phantom{a}\quad (d) $p(\bx, r) = 0$ $\forall\, r\le 0$, a.e.\ $\bx\in\Gamma_C$.

\smallskip
Note that \underline{$H(p)$}\,(b) and (d) imply
\begin{equation}
|p(\bx,r)|\le L_p|r|\quad \forall\,r\in\mathbb{R},\ {\rm a.e.}\ \bx\in\Gamma_C.
\label{eq2.9a}
\end{equation}

\noindent\underline{$H(\fb)$}: For the densities of body and traction forces,
\[ \fb_0 \in C([0, T]; L^2(\Omega)^d ),\quad \fb_N\in C([0, T]; L^2(\Gamma_N)^d).\]

\noindent\underline{$H_0$}: For the friction and wear coefficients, and the foundation velocity,\\
\phantom{a}\quad (a) $\mu\in L^\infty(\Gamma_C)$, $\mu(\bx)\ge 0$ a.e.\ $\bx\in \Gamma_C$;\\
\phantom{a}\quad (b) $\kappa \in L^\infty(\Gamma_C)$, $\kappa(\bx)\ge 0$ a.e.\ $\bx\in \Gamma_C$;\\
\phantom{a}\quad (c) $\bv^* \in C([0, T ];\mathbb{R}^d)$, $\|\bv^*(t)\|\ge v_0 > 0$ $\forall\, t \in [0, T]$.

We notice that hypotheses \underline{$H_0$} implies the following regularities:
\begin{equation}
\bn^*\in C([0, T];\mathbb{R}^d),\quad \alpha\in C([0, T ]; L^\infty(\Gamma_C)),
\label{eq10}
\end{equation}
where $\bn^*$ and $\alpha$ are defined in \eqref{bn}.
 
Finally, we will need a smallness assumption on the combined effect of the Lipschitz constant of the 
normal compliance function $p$ and the friction coefficient $\mu$.  Recall that $c_0$ is the constant 
in the inequality \eqref{eq9}.

\noindent\underline{$H_s$}: $c^2_0 L_p\|\mu\|_{L^\infty(\Gamma_C)}< m_{\cal F}$.

Now we define some operators and functions needed in the variational formulation of Problem~\ref{P1}.
Let $F\colon V\to V^*$, $\fb\colon[0, T ]\to V^*$ and $\varphi\colon [0,T]\times L^2(\Gamma_C)\times
V \times V\to \mathbb{R}$ be defined for all $\bu,\bv\in V$, $w \in L^2(\Gamma_C)$, $t\in[0,T]$ as follows:
\begin{align*}
&\langle F\bu,\bv\rangle_{V^*\times V}=({\cal F}(\bvarepsilon(\bu)),\bvarepsilon(\bv))_{\cal H},\\
&\langle \fb(t),\bv\rangle_{V^*\times V}=\int_\Omega\fb_0(t)\cdot\bv\,dx+\int_{\Gamma_N}\fb_N(t)\cdot\bv\,da,\\
&\varphi(t,w,\bu,\bv)=\int_{\Gamma_C} p(u_\nu-w) \left[v_\nu+\mu\,\bn^*(t)\cdot\bv_\tau\right] da.
\end{align*}

Let $W=L^2(\Gamma_C)$ be the space for the wear variable $w$.  Using the standard procedures
in the mathematical theory of contact mechanics, we obtain the week formulation of Problem~\ref{P1}.

\begin{problem}\label{P2}
Find $\bu \colon [0,T]\to U$ and $w\colon [0,T] \to W$ such that for all $t\in[0,T]$,
\begin{align}
& \langle F\bu(t),\bv-\bu(t)\rangle_{V^*\times V}+\varphi(t,w(t),\bu(t),\bv)-\varphi(t,w(t),\bu(t),\bu(t))
\nonumber\\
&\qquad \qquad\qquad {}\ge \langle \fb(t),\bv-\bu(t)\rangle_{V^*\times V}\quad\forall\,\bv\in U,\label{eq11}\\
& w(t)=\int_0^t \alpha(s)\,p(u_\nu(s)-w(s))\,ds.  \label{eq12}
\end{align}
\end{problem}

We recall the following existence and uniqueness result for Problem \ref{P2} from \cite{SPS17}.

\begin{theorem}\label{thm1}
Assume \underline{$H({\cal F})$}, \underline{$H(p)$}, \underline{$H(\fb)$}, \underline{$H_0$} 
and \underline{$H_s$}. Then Problem \ref{P2} has a unique solution 
with the regularity
\[ \bu\in C([0, T ]; V), \quad \bsigma\in C([0, T ];{\cal H}),\quad  w \in C^1([0,T];W). \]
In addition, $w(t)\ge 0$ for all $t\in [0,T]$, a.e.\ on $\Gamma_C$.
\end{theorem}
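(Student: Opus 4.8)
The plan is to decouple the coupled system \eqref{eq11}--\eqref{eq12} by a fixed point argument on the wear variable: for a frozen wear history $\eta$, \eqref{eq11} becomes, at each time $t$, an elliptic quasi-variational inequality for the displacement, and \eqref{eq12} then becomes a Volterra-type equation for $\eta$ itself. First I would fix $\eta\in C([0,T];W)$ and $t\in[0,T]$ and solve: find $\bu\in U$ with $\langle F\bu,\bv-\bu\rangle_{V^*\times V}+\varphi(t,\eta(t),\bu,\bv-\bu)\ge\langle\fb(t),\bv-\bu\rangle_{V^*\times V}$ for all $\bv\in U$, using that $\varphi(t,w,\bu,\cdot)$ is linear. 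Splitting $\varphi=\varphi_1+\varphi_2$ into its normal part $\varphi_1=\int_{\Gamma_C}p(u_\nu-\eta(t))\,v_\nu\,da$ and its friction part $\varphi_2=\int_{\Gamma_C}\mu\,p(u_\nu-\eta(t))\,\bn^*(t)\cdot\bv_\tau\,da$, the map $\bu\mapsto\big(\bv\mapsto\langle F\bu,\bv\rangle_{V^*\times V}+\varphi_1(t,\eta(t),\bu,\bv)\big)$ from $V$ to $V^*$ is Lipschitz continuous (by \underline{$H({\cal F})$}(b), \eqref{eq2.9a}, \eqref{eq9}) and strongly monotone with constant $m_{\cal F}$ (by \underline{$H({\cal F})$}(c) together with the monotonicity \underline{$H(p)$}(c)); so for each frozen $\bz\in V$ the elliptic variational inequality obtained by replacing $\varphi_2(t,\eta(t),\bu,\cdot)$ with the bounded linear functional $\varphi_2(t,\eta(t),\bz,\cdot)$ is uniquely solvable, defining $\bu=\Theta(\bz)\in U$. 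Testing the inequalities for $\Theta(\bz_1)$ and $\Theta(\bz_2)$ against each other and using \underline{$H(p)$}(b), \eqref{eq9} and $\|\bn^*(t)\|=1$ gives $m_{\cal F}\|\Theta(\bz_1)-\Theta(\bz_2)\|_V\le c_0^2L_p\|\mu\|_{L^\infty(\Gamma_C)}\|\bz_1-\bz_2\|_V$, so \underline{$H_s$} makes $\Theta$ a contraction and its unique fixed point $\bu_\eta(t)$ solves the quasi-variational inequality; a perturbation estimate using continuity of $\fb,\bn^*,\eta$ in $t$ (and \underline{$H(p)$}(b)) yields $\bu_\eta\in C([0,T];V)$ together with a Lipschitz bound $\|\bu_{\eta_1}(t)-\bu_{\eta_2}(t)\|_V\le c\,\|\eta_1(t)-\eta_2(t)\|_W$ with $c$ independent of $t$.

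Next I would define $\Lambda\colon C([0,T];W)\to C([0,T];W)$ by $(\Lambda\eta)(t)=\int_0^t\alpha(s)\,p(u_{\eta\nu}(s)-\eta(s))\,ds$, which is well defined by \eqref{eq10}, \eqref{eq2.9a} and the previous step. From \underline{$H(p)$}(b), the bound on $\alpha$ in \eqref{eq10}, and the Lipschitz estimate above, $\|(\Lambda\eta_1)(t)-(\Lambda\eta_2)(t)\|_W\le C\int_0^t\|\eta_1(s)-\eta_2(s)\|_W\,ds$, hence $\|\Lambda^n\eta_1-\Lambda^n\eta_2\|_{C([0,T];W)}\le\frac{(CT)^n}{n!}\|\eta_1-\eta_2\|_{C([0,T];W)}$; therefore some power of $\Lambda$ is a contraction and $\Lambda$ has a unique fixed point $w$. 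Setting $\bu:=\bu_w$, the pair $(\bu,w)$ solves Problem~\ref{P2}, and uniqueness follows because any solution of Problem~\ref{P2} produces a fixed point of $\Lambda$. For the regularity: $\bu\in C([0,T];V)$ has been shown, so $\bsigma={\cal F}\bvarepsilon(\bu)\in C([0,T];{\cal H})$ by \underline{$H({\cal F})$}(b); and since $s\mapsto\alpha(s)\,p(u_\nu(s)-w(s))$ is continuous into $W$, \eqref{eq12} gives $w\in C^1([0,T];W)$ with $w'(t)=\alpha(t)\,p(u_\nu(t)-w(t))$. As $\alpha\ge0$, $p$ takes values in $\real_+$, and $w(0)=0$, we conclude $w(t)\ge0$ for all $t\in[0,T]$, a.e.\ on $\Gamma_C$.

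The hard part is the first step: establishing unique solvability of the elliptic quasi-variational inequality \eqref{eq11} for frozen wear, and the Lipschitz dependence of its solution on $t$ and on the frozen wear. This is precisely where \underline{$H_s$} enters, serving to dominate the friction coupling (the $\varphi_2$ term, which must be frozen to obtain a genuine variational inequality) by the strongly monotone principal part, which absorbs $F$ and, thanks to the monotonicity of $p$, also the normal term $\varphi_1$. Once this step is in place, the Volterra iteration of the second step and the regularity and sign conclusions are routine.
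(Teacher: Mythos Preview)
The paper does not actually prove Theorem~\ref{thm1}: it is stated as a result \emph{recalled from} \cite{SPS17}, with no argument given in the present paper. So there is no in-paper proof to compare against.

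That said, your sketch is correct and is precisely the kind of argument one expects in \cite{SPS17}: freeze the wear history, solve the pointwise-in-$t$ elliptic quasi-variational inequality by an inner fixed point on the friction slot (where \underline{$H_s$} yields the contraction constant $c_0^2L_p\|\mu\|_{L^\infty(\Gamma_C)}/m_{\cal F}<1$, the normal-compliance term being absorbed into the strongly monotone part thanks to \underline{$H(p)$}(c)), then run an outer Banach/Volterra fixed point on the wear map $\Lambda$. Your regularity and sign arguments are also sound: continuity of $\bu$ gives continuity of $\bsigma$ via \underline{$H({\cal F})$}(b); the integrand in \eqref{eq12} is continuous into $W$, hence $w\in C^1([0,T];W)$; and $w'\ge 0$ a.e.\ with $w(0)=0$ yields $w(t)\ge 0$. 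One very small point of presentation: when you invoke Lipschitz dependence $\|\bu_{\eta_1}(t)-\bu_{\eta_2}(t)\|_V\le c\,\|\eta_1(t)-\eta_2(t)\|_W$, the clean way to extract it is exactly the four-term estimate \eqref{eq49} combined with \underline{$H_s$}; you may want to cite that identity rather than leaving it implicit.
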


\section{Numerical analysis}\label{sec4}
\setcounter{equation}0

We turn to the numerical solution of Problem \ref{P2}.  
Let $V^h \subset V$ and $W^h\subset W$ be two families of finite dimensional subspaces with
a discretization parameter $h > 0$. Then define $U^h = U \cap V^h$.  Let $0=t_0<t_1<\cdots<t_N=T$
be a partition of the time interval $[0,T]$.  Denote $k_n=t_{n+1}-t_n$, $0\le n\le N-1$, and 
$k=\max_{0\le n\le N-1} k_n$ for the time step size.
For a function $z$ continuous in $t$, we write $z_n = z(t_n)$.

We make the following additional assumptions on the solution $\bu$ to Problem \ref{P2}
and the velocity of the foundation $\bv^*$.

\noindent\underline{$H_1$}: $\bu\in H^1(0, T ; V)$, $\bv^*\in W^{1,\infty}(0,T;\mathbb{R}^d)$.

Note that assumptions \underline{$H_1$} and \underline{$H_0$}\,(b) imply that
\begin{equation}
\alpha\in W^{1,\infty}(0,T;L^\infty(\Gamma_C)). 
\label{eq13}
\end{equation}

Consider the following fully discrete scheme for solving Problem \ref{P2}.

\begin{problem}\label{P3}
Find $\bu^{hk}=\{\bu^{hk}_n\}_{n=0}^N\subset U^h$ and $w^{hk}=\{w^{hk}_n\}_{n=0}^N\subset W^h$,
$w^{hk}_0=0$, such that for $0\le n\le N$,
\begin{align}
&\langle F\bu^{hk}_n,\bv^h-\bu^{hk}_n\rangle_{V^*\times V}+\varphi(t_n,w^{hk}_n,\bu^{hk}_n,\bv^h)
-\varphi(t_n,w^{hk}_n,\bu^{hk}_n,\bu^{hk}_n)\nonumber\\
&\qquad\qquad\qquad\ge \langle \fb_n,\bv^h-\bu^{hk}_n\rangle_{V^*\times V}\quad\forall\,\bv^h\in U^h,
\label{eq14}
\end{align}
and for $1\le n\le N$,
\begin{equation}
w^{hk}_n=\sum_{j=0}^{n-1} k_j \alpha_j p(u^{hk}_{j,\nu}-w^{hk}_j).  \label{eq15}
\end{equation}
\end{problem}

We remark that existence of a unique solution to Problem \ref{P3} follows from an application 
of discrete version of Theorem \ref{thm1}.  We also remark that the numerical scheme considered
in \cite{JO18} is a special case of Problem \ref{P3} where a uniform partition of the time
interval $[0,T]$ is used.  For a uniform partition of $[0,T]$ into $N$ equal size sub-intervals, 
we let $k = T/N$ be the time step and $t_n = n\,k$, $0\le n\le N$, the node points. 

We will make use of the following discrete Gronwall inequality (\cite[Lemma 7.25]{HS2002}).

\begin{lemma}\label{lem3}
Assume $\{g_n\}_{n=1}^N$ and $\{e_n\}_{n=1}^N$ are two sequences of non-negative numbers satisfying
\[ e_n\le c\,g_n+c\,\sum_{j=1}^{n-1} k_j e_j,\quad n=1,\dots,N. \]
Then
\[ e_n\le c\,\Big(g_n+\sum_{j=1}^{n-1} k_j g_j\Big),\quad n=1,\dots,N. \]
Therefore,
\[ \max_{1 \le n \le N} e_n \le c\,\max_{1 \le n \le N} g_n. \]
\end{lemma}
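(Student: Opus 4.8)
The plan is to turn the implicit inequality into an explicit first–order recursion for the partial sums and then iterate it. Introduce $E_n = \sum_{j=1}^{n-1} k_j e_j$ for $1\le n\le N$, so that $E_1 = 0$ and $E_{n+1} = E_n + k_n e_n$. The hypothesis is precisely $e_n \le c\,g_n + c\,E_n$. Substituting this into the identity for $E_{n+1}$ gives $E_{n+1} \le (1 + c\,k_n)\,E_n + c\,k_n\,g_n$ for $1\le n\le N-1$.

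Next I would solve this linear recursion. Since $E_1 = 0$, an easy induction yields $E_{n+1} \le c\sum_{j=1}^{n}\Big(\prod_{i=j+1}^{n}(1 + c\,k_i)\Big)k_j\,g_j$, with the usual convention that the empty product (the term $j=n$) equals $1$. Bounding $1 + c\,k_i \le e^{c k_i}$ and using $\sum_{i=j+1}^{n} k_i = t_{n+1} - t_{j+1}\le T$, each product is at most $e^{cT}$, hence $E_{n+1} \le c\,e^{cT}\sum_{j=1}^{n} k_j\,g_j$. Feeding this back into $e_n \le c\,g_n + c\,E_n$ gives $e_n \le c\,g_n + c^2 e^{cT}\sum_{j=1}^{n-1} k_j\,g_j$, which is the first claimed estimate once we relabel the generic constant $c$ so that it absorbs $c\,e^{cT}$ (and is independent of $h$, $k$ and the choice of partition). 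For the last assertion, bound $g_n$ and every $g_j$ by $\max_{1\le m\le N} g_m$ and use $\sum_{j=1}^{n-1} k_j \le T$ once more, obtaining $\max_{1\le n\le N} e_n \le c\,(1+T)\max_{1\le n\le N} g_m$, and absorb $(1+T)$ into $c$.

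I do not expect a genuine obstacle: the argument is elementary, and the only points needing care are the bookkeeping in the telescoping product — in particular the empty–product convention and the identity $\sum_{i=j+1}^{n} k_i = t_{n+1}-t_{j+1}$ — together with the understanding, standard in this setting, that $c$ is a generic constant that may change from one occurrence to the next while remaining independent of the discretization. Note also that, because the right-hand side of the hypothesis involves $e_j$ only for $j\le n-1$, no smallness restriction on $k$ is required.
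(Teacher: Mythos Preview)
Your argument is correct and is the standard way to prove this discrete Gronwall inequality: reduce to a first-order linear recursion for the partial sums $E_n=\sum_{j=1}^{n-1}k_j e_j$, iterate, and control the resulting products via $1+ck_i\le e^{ck_i}$ and $\sum_i k_i\le T$. The bookkeeping you flag (empty-product convention, telescoping of step sizes, generic constants) is handled properly, and you are right that no smallness assumption on $k$ is needed because the sum on the right stops at $j=n-1$.

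Note, however, that the paper does \emph{not} supply its own proof of this lemma: it is quoted verbatim from \cite[Lemma~7.25]{HS2002} and used as a tool. So there is no ``paper's approach'' to compare against; your write-up simply fills in what the paper outsources to a reference, and it does so along the same lines as the proof in \cite{HS2002}.
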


We have Ce\'{a}'s inequality useful for error estimation.

\begin{theorem}\label{thm4}
Under the assumptions stated in Theorem \ref{thm1} and the additional hypothesis \underline{$H_1$}, there 
exists a constant $c > 0$ such that for any $\bv^h_n\in U^h$, $1\le n\le N$,
\begin{align}
\max_{1\le n\le N}\left(\|\bu_n-\bu^{hk}_n\|_V^2+\|w_n-w^{hk}_n\|_W^2\right)
& \le c\,k^2+ c\,k\,\|\bu_0-\bu^{hk}_0\|_V^2\nonumber\\
&\quad{} +c\,\max_{1\le n\le N}\left(\|\bu_n-\bv^h_n\|_V^2+|R_n(w_n,\bu_n,\bv^h_n)|\right) 
\label{eq17}
\end{align}
where
\begin{align}
R_n(w_n,\bu_n,\bv^h_n)& =\langle F\bu_n,\bv^h_n-\bu_n\rangle_{V^*\times V}
+\varphi(t_n,w_n,\bu_n,\bv^h_n)-\varphi(t_n,w_n,\bu_n,\bu_n)\nonumber\\
&\quad{} -\langle \fb_n,\bv^h_n-\bu_n\rangle_{V^*\times V}.
\label{eq18}
\end{align}
\end{theorem}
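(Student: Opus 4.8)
The plan is to derive the two error estimates separately and then combine them. First I would take the variational inequality \eqref{eq11} at $t=t_n$, restrict the test function to $\bv=\bv^h_n\in U^h\subset U$, and subtract the discrete inequality \eqref{eq14} tested with $\bv^h=I^h\bu_n$ (or directly keep the abstract $\bv^h_n$). After inserting and regrouping the $\varphi$-terms, the strong monotonicity \underline{$H(\mathcal F)$}(c) gives a lower bound $m_{\mathcal F}\|\bu_n-\bu^{hk}_n\|_V^2$ on the $F$-difference, while the Lipschitz bounds \underline{$H(\mathcal F)$}(b), \underline{$H(p)$}(b), the trace inequality \eqref{eq9}, and the explicit form of $\varphi$ control the cross terms. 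The key mechanism is that the $\varphi$-terms produce a contribution $c_0^2 L_p\|\mu\|_{L^\infty(\Gamma_C)}\|\bu_n-\bu^{hk}_n\|_V^2$ which, by the smallness assumption \underline{$H_s$}, can be absorbed into the $m_{\mathcal F}$-term; the residual then appears as $R_n(w_n,\bu_n,\bv^h_n)$ from \eqref{eq18} together with an approximation term $\|\bu_n-\bv^h_n\|_V^2$ and a term involving $\|w_n-w^{hk}_n\|_W$ coming from the difference $p(u_{n,\nu}-w_n)$ versus $p(u^{hk}_{n,\nu}-w^{hk}_n)$. This yields, after Young's inequality,
\[
\|\bu_n-\bu^{hk}_n\|_V^2\le c\,\|w_n-w^{hk}_n\|_W^2+c\,\|\bu_n-\bv^h_n\|_V^2+c\,|R_n(w_n,\bu_n,\bv^h_n)|.
\]

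Next I would estimate the wear error. Comparing \eqref{eq12} at $t=t_n$ with \eqref{eq15}, I would split $w_n-w^{hk}_n$ into a time-discretization (quadrature) error, namely the difference between $\int_0^{t_n}\alpha(s)p(u_\nu(s)-w(s))\,ds$ and the left-endpoint Riemann sum $\sum_{j=0}^{n-1}k_j\alpha_j p(u_{j,\nu}-w_j)$, plus a term $\sum_{j=0}^{n-1}k_j\big[\alpha_j p(u_{j,\nu}-w_j)-\alpha_j p(u^{hk}_{j,\nu}-w^{hk}_j)\big]$. The quadrature error is $O(k)$ in $W$ because of the regularity \eqref{eq13} of $\alpha$ and the $H^1(0,T;V)$ regularity of $\bu$ from \underline{$H_1$} (so $t\mapsto p(u_\nu(t)-w(t))$ is Lipschitz into $W$); the second term is bounded using \underline{$H_0$}(b), \eqref{eq2.9a}, \underline{$H(p)$}(b), and \eqref{eq9} by $c\sum_{j=0}^{n-1}k_j\big(\|\bu_j-\bu^{hk}_j\|_V+\|w_j-w^{hk}_j\|_W\big)$. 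Squaring and using $\big(\sum k_j a_j\big)^2\le T\sum k_j a_j^2$, I obtain
\[
\|w_n-w^{hk}_n\|_W^2\le c\,k^2+c\sum_{j=0}^{n-1}k_j\big(\|\bu_j-\bu^{hk}_j\|_V^2+\|w_j-w^{hk}_j\|_W^2\big).
\]

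Finally I would substitute the $\bu$-estimate into the $w$-estimate. Using the $\bu$-bound at index $j$ to replace $\|\bu_j-\bu^{hk}_j\|_V^2$ inside the sum, and separating the $j=0$ contribution $c\,k\,\|\bu_0-\bu^{hk}_0\|_V^2$, gives an inequality of the form
\[
e_n\le c\,k^2+c\,k\,\|\bu_0-\bu^{hk}_0\|_V^2+c\max_{1\le n\le N}\big(\|\bu_n-\bv^h_n\|_V^2+|R_n(w_n,\bu_n,\bv^h_n)|\big)+c\sum_{j=1}^{n-1}k_j e_j,
\]
with $e_n=\|w_n-w^{hk}_n\|_W^2$. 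Applying the discrete Gronwall inequality, Lemma \ref{lem3}, bounds $\max_n\|w_n-w^{hk}_n\|_W^2$ by the right-hand side; feeding this back into the $\bu$-estimate bounds $\max_n\|\bu_n-\bu^{hk}_n\|_V^2$ as well, and adding the two yields \eqref{eq17}. The main obstacle I anticipate is bookkeeping the coupling cleanly: the $\bu$-error at step $n$ depends on the $w$-error at step $n$, while the $w$-error at step $n$ depends on both errors at earlier steps only, so one must be careful to route the dependence through the Gronwall argument in the right order (estimate $w$ via Gronwall first, then recover $\bu$), and to verify that the constant from absorbing the $\varphi$-terms via \underline{$H_s$} is genuinely independent of $h$, $k$ and $n$.
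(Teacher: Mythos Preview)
Your proposal is correct and follows essentially the same route as the paper: the paper derives the recursive bound \eqref{eq28} (by citing the argument of \cite{JO18}) and then applies the discrete Gronwall inequality of Lemma~\ref{lem3}, which is exactly the two-step structure you lay out in detail. One small imprecision: under \underline{$H_1$} you only have $\bu\in H^1(0,T;V)$, so $t\mapsto \alpha(t)\,p(u_\nu(t)-w(t))$ is in $H^1(0,T;W)$ rather than Lipschitz, but this is still enough for the $O(k)$ left-endpoint quadrature error; also note that the paper applies Gronwall directly to the combined quantity $e_n=\|\bu_n-\bu^{hk}_n\|_V^2+\|w_n-w^{hk}_n\|_W^2$ rather than to the wear error alone, a cosmetic difference from your routing.
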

\begin{proof}
By modifying the proof of Theorem 4 in \cite{JO18}, we can establish the inequality
\begin{align}
\|\bu_n-\bu^{hk}_n\|_V^2+\|w_n-w^{hk}_n\|_W^2 &\le c\,\|\bu_n-\bv^h_n\|_V^2+\left|R_n(w_n,\bu_n,\bv^h_n)\right|
+c\,k^2+c\,k\,\|\bu_0-\bu^{hk}_0\|_V^2\nonumber\\
&\quad{}+ c\sum_{j=1}^{n-1} k_j \left(\|\bu_j-\bu^{hk}_j\|_V^2+\|w_j-w^{hk}_j\|_W^2\right).
\label{eq28}
\end{align}
Applying Lemma \ref{lem3} on \eqref{eq28}, we get the inequality \eqref{eq17}. \hfill
\end{proof}

Note that from \underline{$H(p)$} and \underline{$H_0$}, we have (cf.\ \cite[(27)]{JO18}), for $t\in [0,T]$,
\begin{align}
&\varphi(t,w_1,\bu_1,\bv_2)+\varphi(t,w_2,\bu_2,\bv_1)-\varphi(t,w_1,\bu_1,\bv_1)-\varphi(t,w_2,\bu_2,\bv_2)
\nonumber\\
&\qquad{}\le L_p\left(c_0\|\bu_1-\bu_2\|_V+\|w_1-w_2\|_W\right)
\left(c_0\|\mu\|_{L^\infty(\Gamma_C)}\|\bv_1-\bv_2\|_V+\|w_1-w_2\|_W\right)\nonumber\\
&\qquad{}\qquad\quad\forall\,\bu_1,\bu_2,\bv_1,\bv_2\in V,\,w_1,w_2\in W.
\label{eq49}
\end{align}

The inequality \eqref{eq17} is the starting point for further error estimation.  
For simplicity, we assume $\Omega$ is a polygonal/polyhedral domain.  Then $\Gamma_D$, $\Gamma_N$ and 
$\Gamma_C$ can be expressed as unions of flat components (line segments for $d=2$ and polygons 
for $d=3$) that have pairwise disjoint interiors.  In particular, we write
$\overline{\Gamma_C}=\cup_{i=1}^{i_0}\Gamma_{C,i}$, where each component $\Gamma_{C,i}$ is a line segment 
if $d=2$ or a polygon if $d=3$.
Consider a regular family of finite element partitions $\{{\cal T}^h\}$ of the domain $\overline{\Omega}$
into triangular or tetrahedral elements such that if the intersection of one side/face of an
element with one flat component of the boundary has a positive relative measure, then the side/face 
lies entirely in that flat component.  Corresponding to ${\cal T}^h$, we define the linear element space 
\begin{equation}
V^h=\left\{\bv^h\in C(\overline{\Omega})^d \mid \bv^h|_T\in \mathbb{P}_1(T)^d,\ T\in {\cal T}^h,\
   \bv^h=\bzero\ {\rm on\ }\Gamma_D\right\}. 
\label{Vh}
\end{equation}
Then we define the discrete admissible finite element set
\begin{equation}
U^h=\left\{\bv^h\in V^h\mid v^h_\nu\le g\ {\rm at\ all\ nodes\ on\ }\Gamma_C\right\}. 
\label{Uh}
\end{equation}
We assume $g$ is a concave function. Then, $U^h=V^h\cap U\subset U$.  We proceed to derive an optimal 
order error estimate for the finite element solution defined by Problem \ref{P3}.

\begin{theorem}\label{thm5}
Keep the assumptions stated in Theorem \ref{thm4}.  Assume further the solution regularities
\begin{align}
&\bu\in C([0,T];H^2(\Omega)^d),\quad \bu|_{\Gamma_{C,i}}\in C([0,T];H^2(\Gamma_{C,i})^d),\quad 1\le i\le i_0,
\label{eq29}\\
& \bsigma\bnu|_{\Gamma}\in C([0,T];L^2(\Gamma)^d). \label{eq30}
\end{align}
Then we have the optimal order error estimate
\begin{equation}
\max_{1\le n\le N}\left(\|\bu_n-\bu^{hk}_n\|_V^2+\|w_n-w^{hk}_n\|_W^2\right)
\le c\left(k^2+h^2\right).  \label{eq31}
\end{equation}
\end{theorem}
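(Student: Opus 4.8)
The plan is to start from Ce\'{a}'s inequality \eqref{eq17} of Theorem \ref{thm4} and to estimate each of its three right-hand terms with the choice $\bv^h_n=\Pi^h\bu_n$, where $\Pi^h$ denotes the $\mathbb{P}_1$ finite element interpolation operator associated with ${\cal T}^h$. Since $\bu\in C([0,T];H^2(\Omega)^d)$ with $d\le 3$, the interpolant is well defined and, because $(\Pi^h\bu_n)(\bx)=\bu_n(\bx)$ at every node $\bx$ on $\Gamma_C$ and $\bu_n\in U$, it satisfies the nodal constraints defining $U^h$, so $\bv^h_n=\Pi^h\bu_n\in U^h$. The standard interpolation error bound then gives $\|\bu_n-\Pi^h\bu_n\|_V\le c\,h\,\|\bu_n\|_{H^2(\Omega)^d}$, which is $\le c\,h$ uniformly in $n$ by \eqref{eq29}; hence $\max_{1\le n\le N}\|\bu_n-\bv^h_n\|_V^2\le c\,h^2$.

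The heart of the argument is the bound $\max_{1\le n\le N}|R_n(w_n,\bu_n,\bv^h_n)|\le c\,h^2$. For this I would recover a boundary representation of $R_n$ from the strong formulation: testing \eqref{eq2} with $\bv^h_n-\bu_n\in V$, applying Green's formula, and using \eqref{eq1}, \eqref{eq3}, \eqref{eq4} together with the definition of $\fb_n$ gives
\[\langle F\bu_n,\bv^h_n-\bu_n\rangle_{V^*\times V}-\langle\fb_n,\bv^h_n-\bu_n\rangle_{V^*\times V}=\int_{\Gamma_C}\bsigma_n\bnu\cdot(\bv^h_n-\bu_n)\,da.\]
Splitting the integrand into normal and tangential parts and substituting the friction law \eqref{eq6}, the tangential term cancels exactly the $\mu\,\bn^*(t_n)$ contribution contained in $\varphi(t_n,w_n,\bu_n,\bv^h_n)-\varphi(t_n,w_n,\bu_n,\bu_n)$, which leaves
\[R_n(w_n,\bu_n,\bv^h_n)=\int_{\Gamma_C}\big(\sigma_{n,\nu}+p(u_{n,\nu}-w_n)\big)\big(v^h_{n,\nu}-u_{n,\nu}\big)\,da\]
(the complementarity relation in \eqref{eq5} would in addition allow replacing $v^h_{n,\nu}-u_{n,\nu}$ by $v^h_{n,\nu}-g$, but that refinement is not needed here). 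On each flat component $\Gamma_{C,i}$ the normal $\bnu$ is constant, so there $v^h_{n,\nu}$ equals the surface $\mathbb{P}_1$-interpolant of $u_{n,\nu}$; then by the Cauchy--Schwarz inequality, \eqref{eq2.9a}, the trace inequality \eqref{eq9} and the regularity \eqref{eq29},
\[|R_n|\le\big\|\sigma_{n,\nu}+p(u_{n,\nu}-w_n)\big\|_{L^2(\Gamma_C)}\sum_{i=1}^{i_0}\|u_{n,\nu}-\Pi^h u_{n,\nu}\|_{L^2(\Gamma_{C,i})}\le c\,h^2,\]
where the first factor is bounded uniformly in $n$ by \eqref{eq30}, $\bu\in C([0,T];V)$ and $w\in C([0,T];W)$, and the second by the $O(h^2)$ surface-interpolation estimate using $u_{n,\nu}|_{\Gamma_{C,i}}\in H^2(\Gamma_{C,i})$.

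It remains to control $k\,\|\bu_0-\bu^{hk}_0\|_V^2$. At $n=0$ both $\bu_0$ and $\bu^{hk}_0$ solve the elliptic variational inequality obtained from \eqref{eq11}, resp.\ \eqref{eq14}, with the fixed data $w=w^{hk}_0=0$; a standard finite element error analysis for such inequalities, based on \underline{$H({\cal F})$}\,(c), the smallness condition \underline{$H_s$} and the Lipschitz estimate \eqref{eq49}, combined with $\bv^h=\Pi^h\bu_0$ and the residual bound just derived, yields $\|\bu_0-\bu^{hk}_0\|_V\le c\,h$, so (as $k$ is bounded) $k\,\|\bu_0-\bu^{hk}_0\|_V^2\le c\,h^2$. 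Inserting the three estimates into \eqref{eq17} produces \eqref{eq31}. I expect the main obstacle to be the derivation of the boundary identity for $R_n$: one has to integrate by parts carefully and apply the friction and contact conditions in the right order so that only the normal interface term survives, since it is precisely this term that admits the optimal $O(h^2)$ bound through surface interpolation; the remaining steps are routine.
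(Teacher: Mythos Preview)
Your proposal is correct and follows essentially the same route as the paper: represent $R_n$ as a boundary integral on $\Gamma_C$ via integration by parts, bound it by an $O(h^2)$ surface-interpolation estimate using the regularity $\bu|_{\Gamma_{C,i}}\in C([0,T];H^2(\Gamma_{C,i})^d)$, handle $\|\bu_0-\bu^{hk}_0\|_V$ by a direct error analysis of the elliptic variational inequality at $t=0$ (which the paper spells out in detail via \underline{$H({\cal F})$}, \underline{$H_s$} and \eqref{eq49}), and insert everything into \eqref{eq17}. The only cosmetic difference is that the paper does not invoke the pointwise friction law \eqref{eq6} to cancel the tangential contribution but simply bounds the full boundary expression by $c\,\|\bu_n-\bv^h_n\|_{L^2(\Gamma_C)^d}$, which yields the same $O(h^2)$ order while sidestepping the (routine but extra) step of recovering \eqref{eq6} from the weak formulation.
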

\begin{proof}
By following the arguments presented in \cite[Section 8.1]{HS2002}, it can be shown that under 
the stated regularity assumptions, the solution of Problem \ref{P2} satisfies, for $t\in [0,T]$, 
\begin{align*}
{\rm Div}\,\bsigma(t)+\fb_0(t)=\bzero\quad&{\rm a.e.\ in}\ \Omega,\\[1mm]
\bsigma(t)\bnu=\fb_N(t)\quad&{\rm a.e.\ on}\ \Gamma_N,
\end{align*}
where
\[ \bsigma(t)={\mathcal F}\bvarepsilon(\bu(t)). \]
Using these relations  we find that
\[ R_n(w_n,\bu_n,\bv^h_n)=\int_{\Gamma_C}\left\{\bsigma_n\bnu{\cdot}(\bv^h_n-\bu_n)
+p(u^{hk}_{n,\nu}-w_n)\left[v^h_{n,\nu}-u_{n,\nu}
+\mu\,\bn^*_n\cdot(\bv_{n,\tau}^h-\bu_{n,\tau})\right]\right\}da. \]
Thus,
\begin{equation}
\left|R_n(w_n,\bu_n,\bv^h_n)\right|\le c\,\|\bu_n-\bv_n^h\|_{L^2(\Gamma_C)^d}. 
\label{eq32}
\end{equation}
This provides an upper bound for the term $\left|R_n(w_n,\bu_n,\bv^h_n)\right|$ on the right hand 
side of \eqref{eq17}.  

Now we bound the error $\|\bu_0-\bu^{hk}_0\|_V$.  For simplicity, we denote
\[ \varphi_0(\bu,\bv):=\varphi(0,0,\bu,\bv). \]
Write
\begin{align}
\langle F\bu_0-F\bu^{hk}_0,\bu_0-\bu^{hk}_0\rangle_{V^*\times V}
& = \langle F\bu_0-F\bu^{hk}_0,\bu_0-\bv^h_0\rangle_{V^*\times V}
 +\langle F\bu_0,\bv^h_0-\bu_0\rangle_{V^*\times V} \nonumber\\
&\quad{} +\langle F\bu_0,\bu_0-\bu^{hk}_0\rangle_{V^*\times V}
-\langle F\bu^{hk}_0,\bv^h_0-\bu^{hk}_0\rangle_{V^*\times V}.
\label{eq53}
\end{align}
From \eqref{eq11} with $t=0$,
\begin{equation}
\langle F\bu_0,\bv-\bu_0\rangle_{V^*\times V}+\varphi_0(\bu_0,\bv)-\varphi_0(\bu_0,\bu_0)
\ge \langle \fb_0,\bv-\bu_0\rangle_{V^*\times V}\quad\forall\,\bv\in U.
\label{eq51}
\end{equation}
From \eqref{eq14} with $n=0$,
\begin{equation}
\langle F\bu^{hk}_0,\bv^h_0-\bu^{hk}_0\rangle_{V^*\times V}+\varphi_0(\bu^{hk}_0,\bv^h_0)
-\varphi_0(\bu^{hk}_0,\bu^{hk}_0)\ge \langle \fb_0,\bv^h_0-\bu^{hk}_0\rangle_{V^*\times V}
\quad\forall\,\bv^h_0\in U^h.
\label{eq52}
\end{equation}
Take $\bv=\bu^{hk}_0$ in \eqref{eq51}, and use the resulting inequality and the inequality \eqref{eq52} 
in \eqref{eq53} to obtain
\begin{align}
\langle F\bu_0-F\bu^{hk}_0,\bu_0-\bu^{hk}_0\rangle_{V^*\times V}
& \le \langle F\bu_0-F\bu^{hk}_0,\bu_0-\bv^h_0\rangle_{V^*\times V}+R_0(0,\bu_0,\bv^h_0)\nonumber\\
&\quad{} +\varphi_0(\bu_0,\bu^{hk}_0)+\varphi_0(\bu^{hk}_0,\bv_0^h)-\varphi_0(\bu_0,\bv_0^h)
-\varphi_0(\bu^{hk}_0,\bu^{hk}_0).
\label{eq54}
\end{align}
By \underline{$H({\cal F})$}\,(c),
\[ m_{\cal F}\|\bu_0-\bu^{hk}_0\|_V^2\le \langle F\bu_0-F\bu^{hk}_0,\bu_0-\bu^{hk}_0\rangle_{V^*\times V}. \]
By \underline{$H({\cal F})$}\,(b),
\[  \langle F\bu_0-F\bu^{hk}_0,\bu_0-\bv^h_0\rangle_{V^*\times V} 
\le L_{\cal F}\|\bu_0-\bu^{hk}_0\|_V\|\bu_0-\bv^h_0\|_V.  \]
Then, for an arbitrarily small $\epsilon>0$, there is a constant $c$ depending on $\epsilon$ such that
\[  \langle F\bu_0-F\bu^{hk}_0,\bu_0-\bv^h_0\rangle_{V^*\times V} 
\le \epsilon\,\|\bu_0-\bu^{hk}_0\|_V^2+c\,\|\bu_0-\bv^h_0\|_V^2.  \]
By \eqref{eq32}, 
\[ R_0(0,\bu_0,\bv^h_0)\le c\,\|\bu_0-\bv^h_0\|_{L^2(\Gamma_C)^d}. \]
By \eqref{eq49},
\begin{align*} 
& \varphi_0(\bu_0,\bu^{hk}_0)+\varphi_0(\bu^{hk}_0,\bv_0^h)-\varphi_0(\bu_0,\bv_0^h)
  -\varphi_0(\bu^{hk}_0,\bu^{hk}_0)\\
&\qquad{}\le c^2_0 L_p\|\mu\|_{L^\infty(\Gamma_C)}\|\bu_0-\bu^{hk}_0\|_V\|\bu^{hk}_0-\bv^h_0\|_V.
\end{align*}
Since 
\[ \|\bu^{hk}_0-\bv^h_0\|_V\le \|\bu_0-\bu^{hk}_0\|_V+ \|\bu_0-\bv^h_0\|_V, \]
for the arbitrarily small $\epsilon>0$, there is a constant $c$ depending on $\epsilon$ such that
\begin{align*} 
& \varphi_0(\bu_0,\bu^{hk}_0)+\varphi_0(\bu^{hk}_0,\bv_0^h)-\varphi_0(\bu_0,\bv_0^h)
  -\varphi_0(\bu^{hk}_0,\bu^{hk}_0)\\
&\qquad{}\le \left(c^2_0 L_p\|\mu\|_{L^\infty(\Gamma_C)}+\epsilon\right)\|\bu_0-\bu^{hk}_0\|_V^2
  +c\,\|\bu_0-\bv^h_0\|_V^2.
\end{align*}
Using these relations in \eqref{eq54}, we obtain
\[ \left(m_{\cal F}-c^2_0 L_p\|\mu\|_{L^\infty(\Gamma_C)}-2\,\epsilon\right)\|\bu_0-\bu^{hk}_0\|_V^2\le
 c\left(\|\bu_0-\bv^h_0\|_V^2+\|\bu_0-\bv^h_0\|_{L^2(\Gamma_C)^d}\right). \]
Recall the condition \underline{$H_s$}; choosing $\epsilon=\left(m_{\cal F}-c^2_0 L_p\|\mu\|_{L^\infty(\Gamma_C)}\right)/4$
we obtain from the above inequality that
\begin{equation}
\|\bu_0-\bu^{hk}_0\|_V^2\le c\left(\|\bu_0-\bv^h_0\|_V^2+\|\bu_0-\bv^h_0\|_{L^2(\Gamma_C)^d}\right).
\label{eq55}
\end{equation}
Using \eqref{eq55} and \eqref{eq32} in \eqref{eq17}, we have
\begin{align}
\max_{0\le n\le N}\left(\|\bu_n-\bu^{hk}_n\|_V^2+\|w_n-w^{hk}_n\|_W^2\right)
& \le c\,k^2+ c\,k\left(\|\bu_0-\bv^h_0\|_V^2+\|\bu_0-\bv^h_0\|_{L^2(\Gamma_C)^d}\right)\nonumber\\
&\quad{} +c\,\max_{1\le n\le N}\left(\|\bu_n-\bv^h_n\|_V^2+\|\bu_n-\bv_n^h\|_{L^2(\Gamma_C)^d}\right)
\label{eq56}
\end{align}
for any $\bv^h_n\in U^h$.  

Thus, by applying the finite element interpolation theory (e.g., \cite{AH2009, Ciar1978}), 
we have the optimal order error bound \eqref{eq31} from \eqref{eq56},
under the solution regularities \eqref{eq29} and \eqref{eq30}.  \hfill
\end{proof}

We comment that if ${\cal F}(\bx,\bvarepsilon)$ is a smooth function of $\bx$, in particular if
${\cal F}(\bx,\bvarepsilon)$ does not depend on $\bx$, then \eqref{eq30} follows from \eqref{eq29}
and thus there is no need to assume \eqref{eq30}.

\section{Numerical results}\label{sec5}
\setcounter{equation}0

In this section, we report computer simulation results on a numerical example.  Let $d=2$ and 
consider a square-shaped set $\Omega=(0,1)\times(0,1)$ with the following partition of the boundary
\[ \Gamma_D=\{0\}\times[0,1],\quad\Gamma_N=([0,1]\times\{1\})\cup(\{1\}\times[0,1]),
\quad\Gamma_C=[0,1]\times\{0\}.\]

The linear elasticity operator $\mathcal{F}$ is defined by
\[\mathcal{F}(\bm{\tau}) = 2\eta\bm{\tau} + \lambda \mbox{tr}(\bm{\tau})I,\qquad \bm{\tau} \in \mathbb{S}^2.\]
Here $I$ denotes the identity matrix, $\mbox{tr}$ denotes the trace of the matrix, $\lambda>0$ and $\eta>0$ 
are the Lame coefficients.  In our simulations, we choose $\lambda = \eta = 4$, $T=1$ and take the following data
\begin{align*}
 &\bm{u}_{0}(\bm{x}) = (0,0), \quad \bm{x} \in \Omega,\\
 &p(r) = \left \{ \begin{array}{ll}
   100 \, r, \quad r \in [0, \infty), \\
   0, \quad r \in (-\infty, 0), \\
  \end{array} \right.\\
 &\bm{f}_N(\bm{x},t) = (0,0), \quad \bm{x} \in \Omega,\ t \in [0,T],\\
 &\bm{f}_0(\bm{x},t) = (0,-2), \quad \bm{x} \in \Omega,\ t \in [0,T],\\
 &g = 0.1.
\end{align*}

We use the linear finite element space $V^h$ defined in \eqref{Vh} and its subset $U^h$ 
defined in \eqref{Uh}, based on uniform triangular partitions of $\overline{\Omega}$. 
We use the uniform partition of the time interval $[0,1]$ with the 
time step size $k=1/N$ for a positive integer $N$.

We first demonstrate the effect of some input data on the deformation of the body.
In all cases, we show the shape of the body at final time $t=1$, and the numerical solutions correspond 
to the time step size $1/16$ and where the boundary $\Gamma_C$ of the body is divided into $16$ equal parts. 

In Figure \ref{figOne} we show the deformed configuration for $\mu(\bm{x}) = 0.3$, $\kappa(\bm{x})= 0.04$
and $\bm{v^*}(\bm{x},t)= (1,0)$. We push the body down towards the moving foundation with a force $\bm{f}_0$, 
and as a result of friction, the soft layer of material covering $\Gamma_C$ wears out allowing the body 
to move downward. We observe that in this case coefficient $\kappa$, governing the rate of wear, 
is not big enough to cause the body to touch the foundation. Because of the friction, the body moves 
in the same direction as the foundation, i.e.\ to the right.

We then increase the wear coefficient $\kappa$ to $\kappa(\bm{x}) = 0.08$.  The deformed configuration
is shown in Figure~\ref{figTwo}.  We observe that the layer of soft material on part of the boundary 
$\Gamma_C$ completely wears out, allowing the body to rest on the rigid foundation as it cannot penetrate
it further.

In Figure~\ref{figThree}, we show the deformed configuration for $\mu(\bm{x}) =1$, $\kappa(\bm{x})= 0.04$
and $\bm{v^*}(\bm{x},t)= (1,0)$.  We observe that the body moves further to the right, which is a result 
of increased friction between soft layer of material covering $\Gamma_C$ and the rigid foundation. 

The result in Figure~\ref{figFour} corresponds to $\mu(\bm{x}) = 0.3$, $\kappa(\bm{x})= 0.02$
and $\bm{v^*}(\bm{x},t)= (-1,0)$. Note that the direction of the motion of the foundation is reversed.
As a result, the lower part of the body squeezes to the left and we observe that the boundary $\Gamma_C$ 
is slightly curled. We conclude that all those modifications lead to results that can be expected.

\begin{figure}[ht]
\centering
\begin{minipage}{.45\textwidth}
  \centering
    \includegraphics[width=0.9\linewidth]{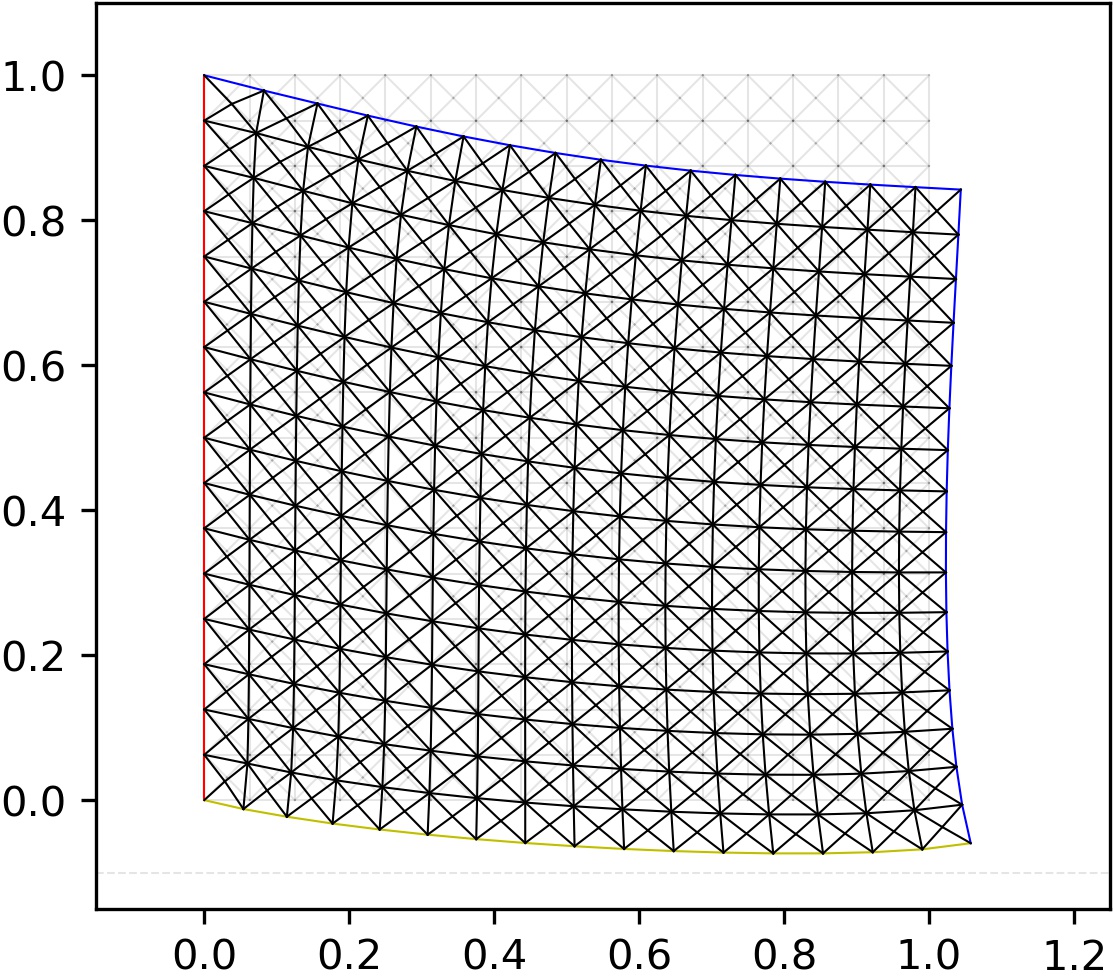}
    \caption{Deformed configuration at $t=1$,
    $\mu= 0.3$, $\kappa= 0.04$, $\bm{v^*}= (1,0)$} \label{figOne}
\end{minipage}
\begin{minipage}{.45\textwidth}
  \centering
    \includegraphics[width=0.9\linewidth]{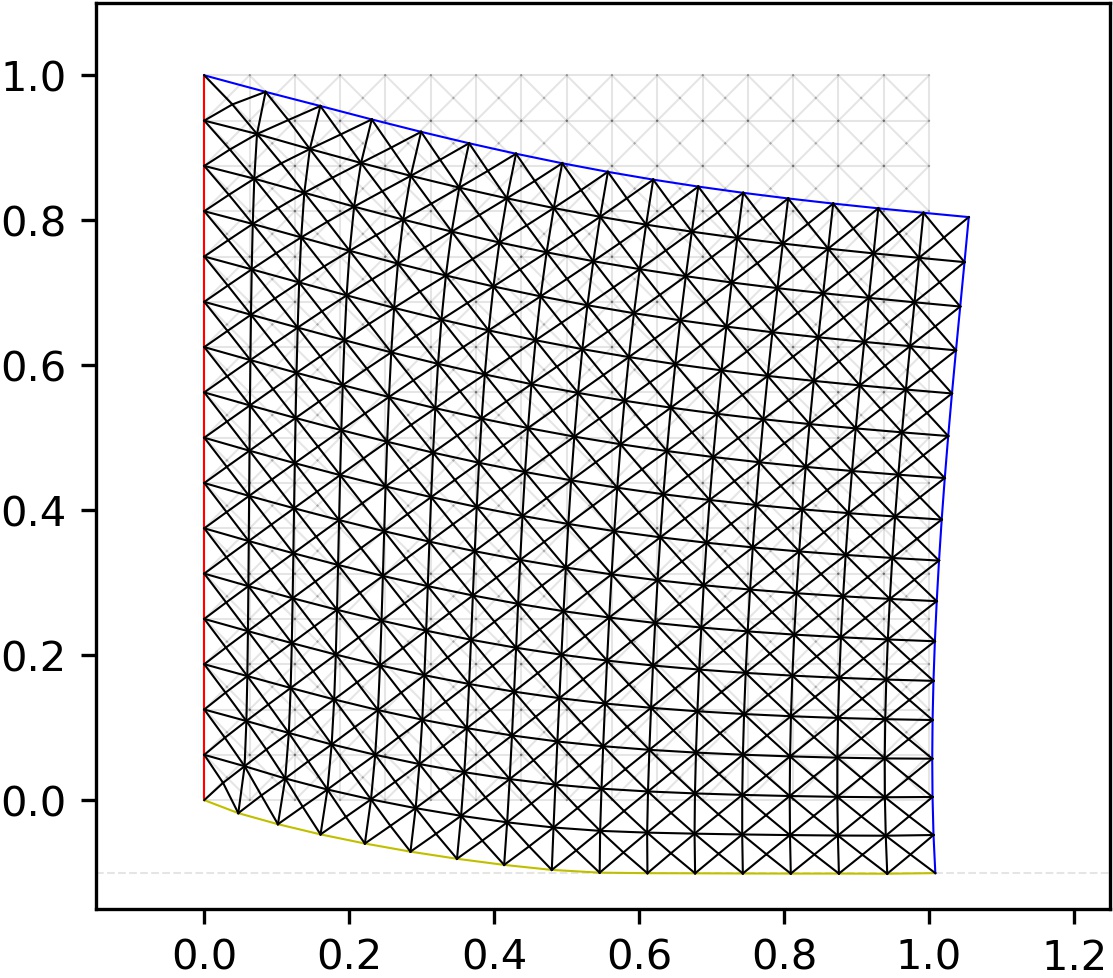}
    \caption{Deformed configuration at $t=1$,
    $\mu= 0.3$, $\kappa= 0.08$, $\bm{v^*}= (1,0)$} \label{figTwo}
\end{minipage}
\end{figure}

\begin{figure}[ht]
\centering
\begin{minipage}{.45\textwidth}
  \centering
 \includegraphics[width=0.9\linewidth]{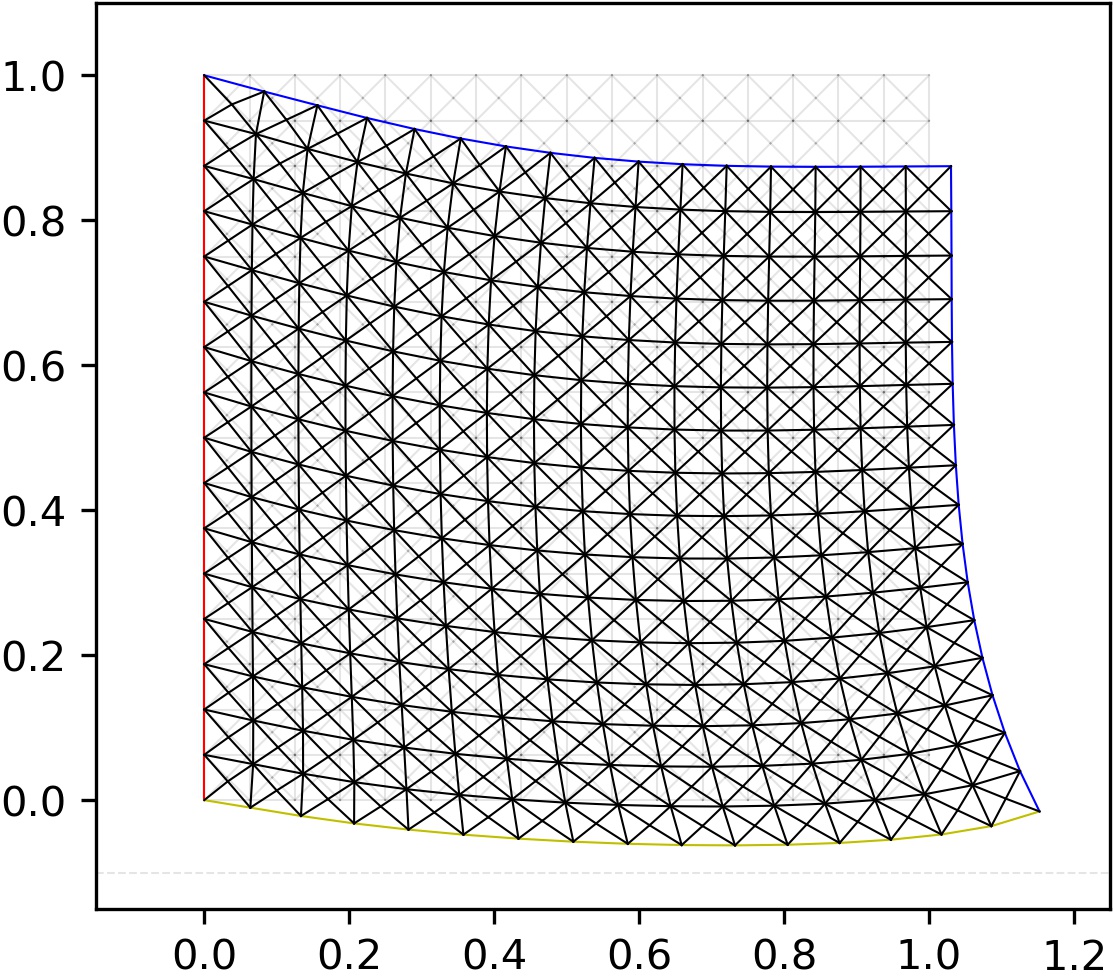}
    \caption{Deformed configuration at $t=1$,
    $\mu=1$, $\kappa= 0.04$, $\bm{v^*}= (1,0)$} \label{figThree}
\end{minipage}
\begin{minipage}{.45\textwidth}
  \centering
    \includegraphics[width=0.9\linewidth]{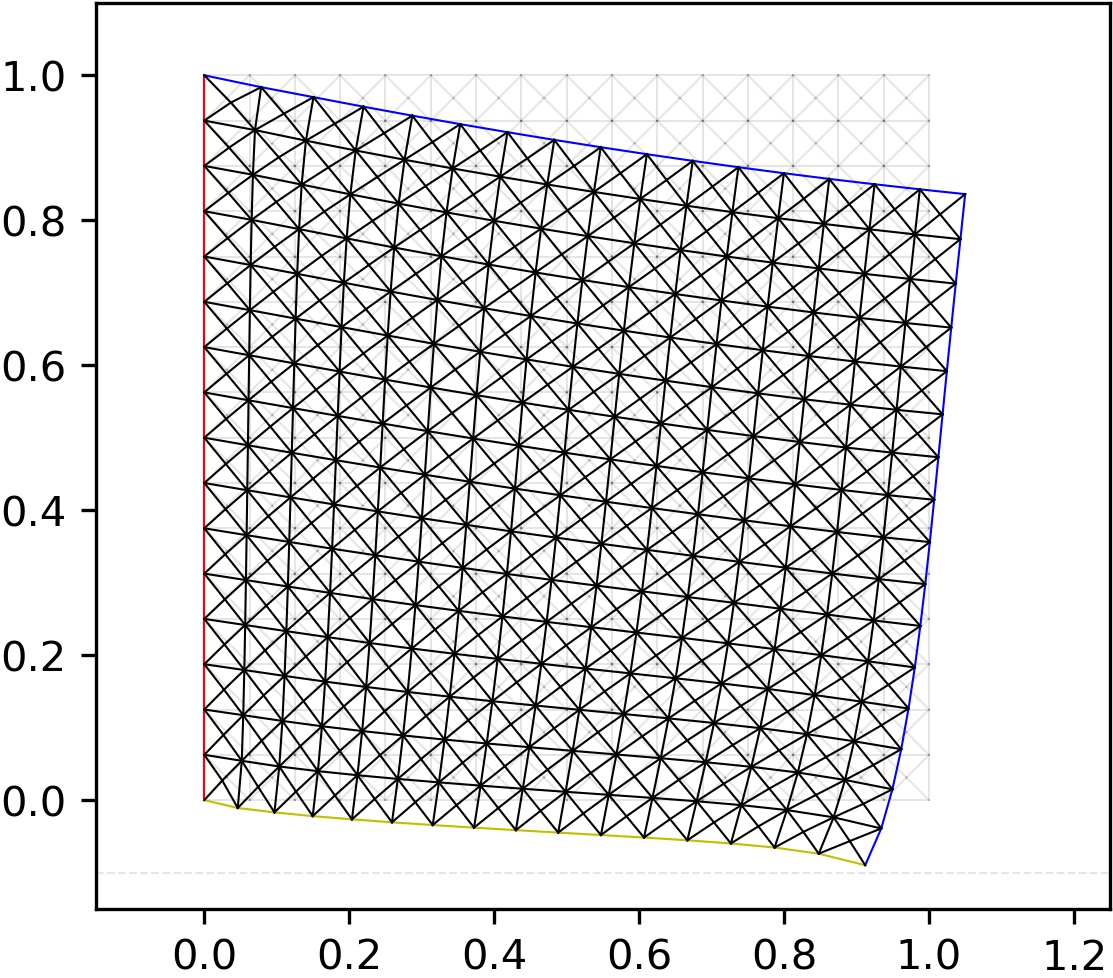}
    \caption{Deformed configuration at $t=1$,
    $\mu= 0.3$, $\kappa= 0.02$, $\bm{v^*}= (-1,0)$} \label{figFour}
\end{minipage}
\end{figure}

\begin{table}[ht]
\footnotesize
\centering
\begin{tabular}{ l r r r r r }\hline
$h+k$ & $1$ & $1/2$ & $1/4$ & $1/8$ & $1/16$ \\ \hline
$\|\bm{u}-\bm{u}^{hk}\|_V/\|\bm{u}\|_V$ & $4.1698e^{-1}$ & $2.6840e^{-1}$ & $1.4360e^{-1}$ 
&  $7.3979e^{-2}$ & $3.4882e^{-2}$ \\ 
{\rm Convergence order} & & 0.6355 &   0.9022 &   0.9569 &   1.0846 \\ \hline
$\|w-w^{hk}\|_{W}/\|w\|_W$&
$2.9009e^{-1}$&$1.0328e^{-1}$&$3.8385e^{-2}$&$1.4694e^{-2}$ & $5.0891e^{-3}$ \\ 
{\rm Convergence order} & &1.4898  &  1.4280   & 1.3853 &   1.5297 \\ \hline
\end{tabular}
\caption{Numerical errors} \label{tabOne}
\end{table}


\begin{figure}[ht]
\centering
\begin{minipage}{.45\textwidth}
  \centering
  \includegraphics[width=0.9\linewidth]{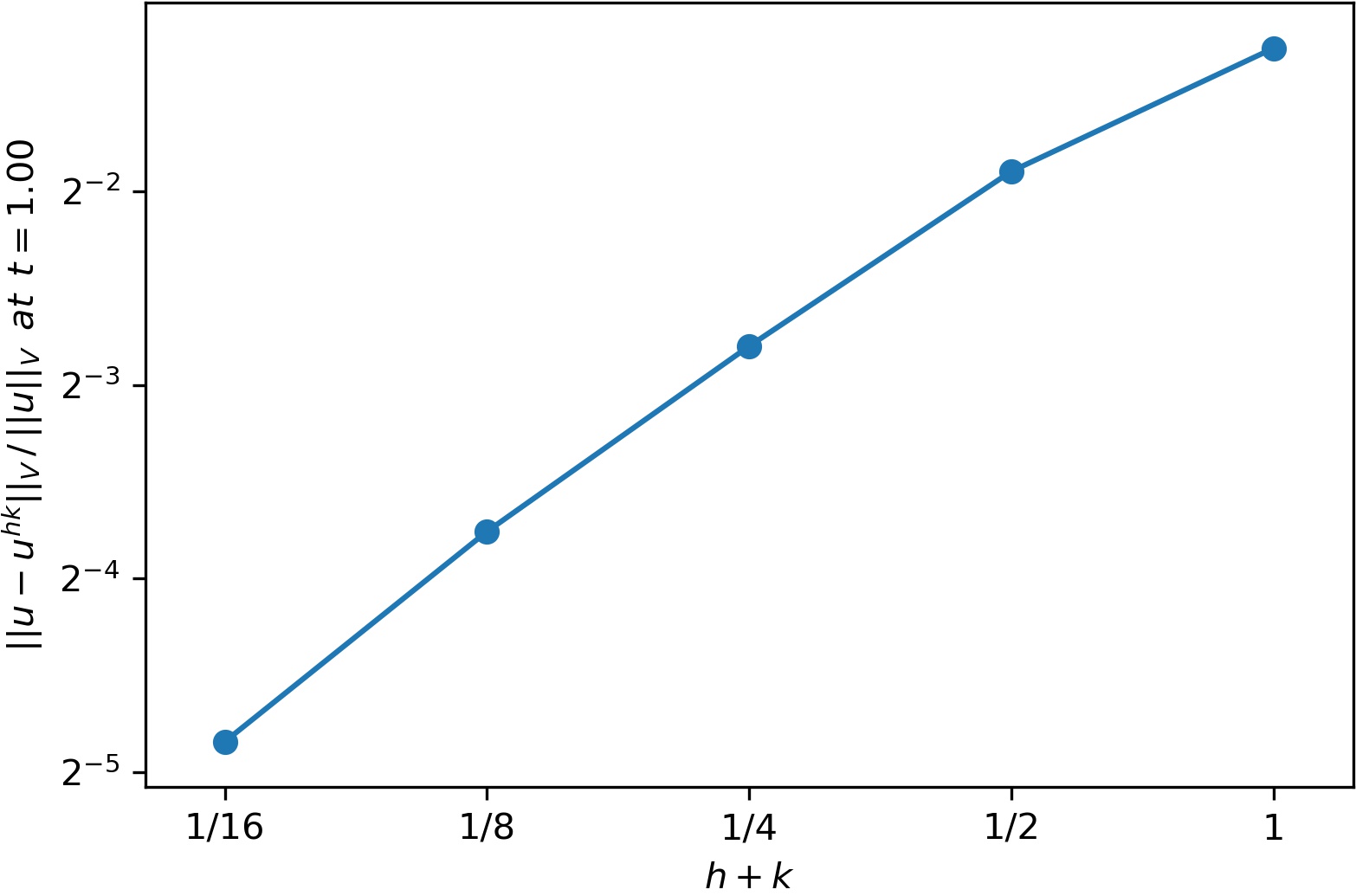}
  \caption{Error estimate $\|\bm{u}  - \bm{u}^{hk}\|_V / \|\bm{u}\|_V$} \label{figFive}
\end{minipage}
\begin{minipage}{.45\textwidth}
  \centering
  \includegraphics[width=0.9\linewidth]{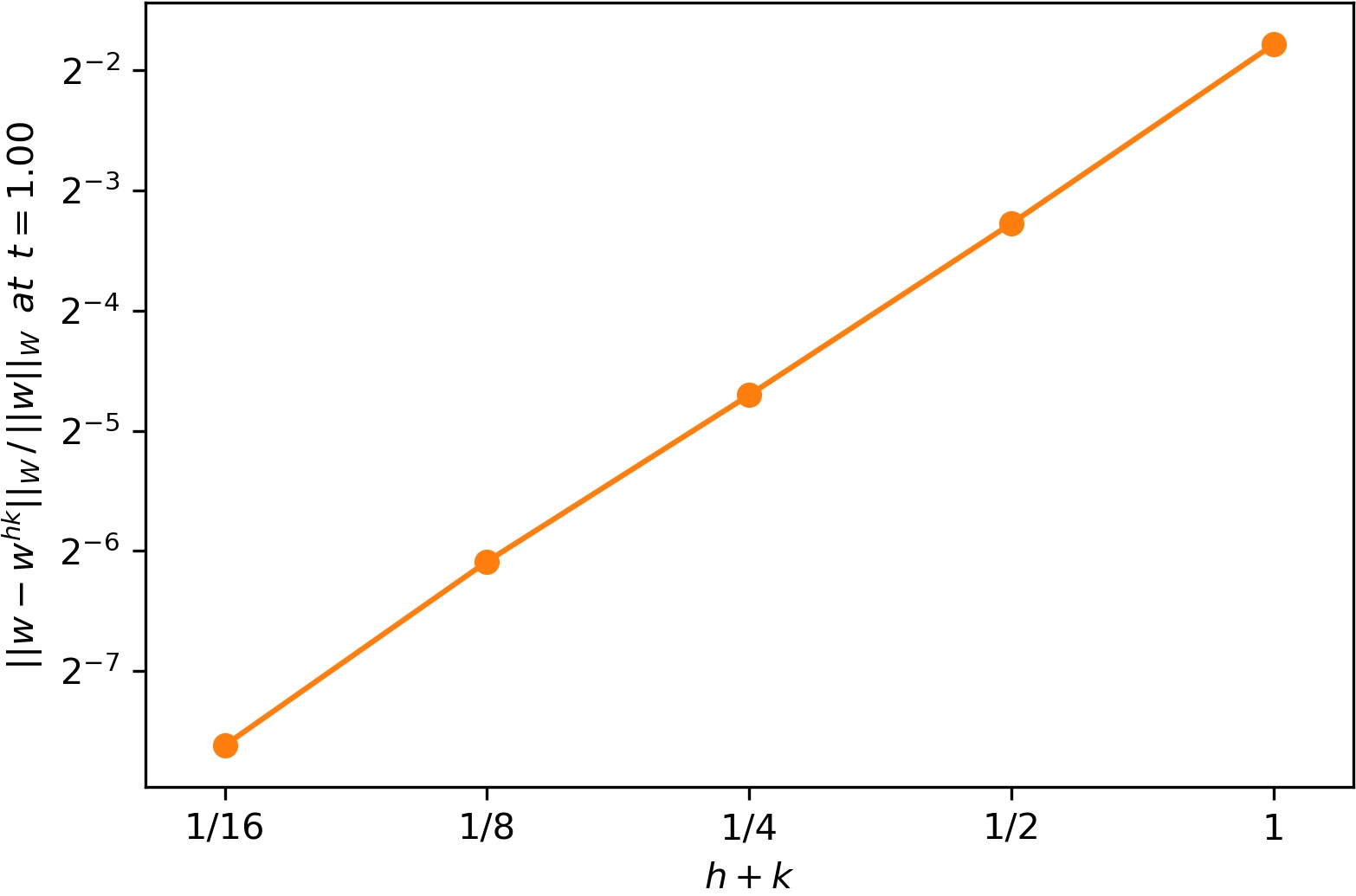}
  \caption{Error estimate $\|w - w^{hk}\|_{W} / \|w\|_{W}$} \label{figSix}
\end{minipage}
\end{figure}

Finally, we explore the numerical convergence orders of the numerical method on the model problem
with $\bm{f}_N(\bm{x},t)=(-0.5,-0.5)$, $\bm{f}_0(\bm{x},t) = (-0.5,-2)$, $\mu(\bm{x}) = 1.0$,
$\kappa(\bm{x})= 0.05$, and $\bm{v^*}(\bm{x},t)= (1,0)$.
We present a comparison of numerical errors $\|\bm{u} - \bm{u}^{hk}\|_V$ and  $\|w - w^{hk}\|_{W}$  
computed for a sequence of solutions to discretized problems. 
We use a uniform discretization of the problem domain and time interval according to the spatial 
discretization parameter $h$ and time step size $k$, respectively. The boundary $\Gamma_C$ of $\Omega$ 
is divided into $1/h$ equal parts. We start with $h = 1/2$ and $k = 1/2$, which are successively halved. 
The numerical solution corresponding to $h = 1/64$ and $k = 1/64$ is taken as the ``exact'' solution 
$\bm{u}$ and $w$ with $\|\bm{u}\|_{V}\doteq 0.19131$ and $\|w\|_{W}\doteq 0.08192$. The results are presented 
in Table \ref{tabOne} and Figures \ref{figFive} and \ref{figSix},
where the dependence of the relative error estimates $\|\bm{u}  - \bm{u}^{hk}\|_V / \|\bm{u}\|_V$ and 
$\|w - w^{hk}\|_{W} / \|w\|_{W}$  with respect to $h+k$ are plotted on a log-log scale. A first order 
convergence is clearly observed for the numerical solutions of the displacement.  The numerical
convergence orders for the numerical solutions of the wear function are somewhat higher than 1.

\medskip
\noindent {\bf Acknowledgments}\\
The project leading to this application has received funding from the European Union's Horizon 2020 research and innovation programme under the Marie Sklo\-do\-wska-Curie grant agreement no.\ 823731 CONMECH.


\begin{thebibliography}{99}

\bibitem{AH2009}
K. Atkinson and W. Han, \emph{Theoretical Numerical Analysis: A Functional Analysis Framework},
third edition, Springer-Verlag, New York,  2009.

\bibitem{Bar06} 
K. Bartosz, Hemivariational inequalities approach to the dynamic viscoelastic sliding contact problem 
with wear, \emph{Nonlinear Anal.\ TMA} {\bf 65} (2006), 546--566.

\bibitem{CHS00} 
J. Chen, W. Han, and M. Sofonea, Numerical analysis of a quasistatic problem of sliding frictional 
contact with wear, \emph{Methods Appl.\ Anal.} {\bf 7} (2000), 687--704.

\bibitem{Ciar1978} 
P.G. Ciarlet, \emph{The Finite Element Method for Elliptic Problems}, North Holland, Amsterdam, 1978.

\bibitem{DL1976}
G. Duvaut  and  J.L. Lions, \emph{Inequalities in Mechanics and Physics}, Springer, Berlin, 1976. 

\bibitem{EJK2005}
C. Eck, J. Jaru\v sek, and M. Krbec, \emph{Unilateral Contact Problems: Variational Methods and
Existence Theorems}, Pure and Applied Mathematics {\bf 270}, Chapman/CRC Press, New York, 2005.

\bibitem{GOS16} 
L. Gasinski, A. Ochal, and M. Shillor, Quasistatic thermoviscoelastic problem with normal compliance, 
multivalued friction and wear diffusion, \emph{Nonlinear Anal.\ RWA} {\bf 27} (2016), 183--202.

\bibitem{HR2013}
W. Han and B.D. Reddy, \emph{Plasticity: Mathematical Theory and Numerical Analysis}, Second Edition,
Springer-Verlag, 2013. 

\bibitem{HSS01}
W. Han, M. Shillor, and M. Sofonea, Variational and numerical analysis of a quasistatic viscoelastic problem
with normal compliance, friction and damage, \emph{J. of Comp.\ and Applied Math.} {\bf 137} (2001), 377--398.
         
\bibitem{HS2002} 
W. Han, M. Sofonea, \emph{Quasistatic Contact Problems in Viscoelasticity and Viscoplasticity}, 
American Mathematical Society and International Press, 2002.

\bibitem{HMP1999}
J. Haslinger, M. Miettinen, and P. D. Panagiotopoulos, \emph{Finite Element Method for Hemivariational 
Inequalities. Theory, Methods and Applications}, Kluwer Academic Publishers, Boston, Dordrecht, London, 1999.

\bibitem{HHNL1988}
I. Hlav\'{a}\v{c}ek, J. Haslinger, J. Ne\v{c}as, and J. Lov\'{\i}\v{s}ek, 
\emph{Solution of Variational Inequalities in Mechanics}, Springer-Verlag, New York, 1988.

\bibitem{JO18}
M. Jureczka and A. Ochal, Numerical analysis and simulations of contact problem with wear, 
\emph{Computers and Mathematics with Applications} (2018), https://doi.org/10.1016/j.camwa.2018.08.044.

\bibitem{KO1988}
N. Kikuchi and J.T. Oden, \emph{Contact Problems in Elasticity: A Study of Va\-ria\-tio\-nal Inequalities 
and Finite Element Methods}, SIAM, Philadelphia, 1988.

\bibitem{KS02} 
K. L. Kuttler and M. Shillor, Dynamic contact normal compliance wear and discontinuous friction coefficient, 
\emph{SIAM J. Math.\ Anal.} {\bf 34} (2002), 1--27.

\bibitem{MOS2013}
S. Mig\'orski, A. Ochal, and M. Sofonea, \emph{Nonlinear Inclusions and Hemivariational Inequalities.
Models and Analysis of Contact Problems}, Springer, New York, 2013.

\bibitem{NP1995}
Z. Naniewicz and P. D. Panagiotopoulos, \emph{Mathematical Theory of Hemivariational Inequalities
and Applications}, Marcel Dekker, Inc., New York, Basel, Hong Kong, 1995.

\bibitem{OJ18} 
A. Ochal and M. Jureczka, Numerical treatment of contact problems with thermal effect, 
\emph{Discrete Contin.\ Dyn.\ Syst.\ Ser.\ B} {\bf 23} (2018), 387--400.

\bibitem{P1993}
P. D. Panagiotopoulos, \emph{Hemivariational Inequalities,
Applications in Mechanics and Engineering}, Springer-Verlag, Berlin, 1993.

\bibitem{RTS01} 
J. Rojek, J. J. Telega, and S. Stupkiewicz, Contact problems with friction, adhesion and wear in 
orthopaedic biomechanics. Part II-numerical implementation and application to implanted knee joints, 
\emph{J. Theoret.\ Appl.\ Mech.} {\bf 39} (2001), 679--706.

\bibitem{SST2004} 
M. Shillor, M. Sofonea, and J. J. Telega, \emph{Models and Analysis of Quasistatic Contact},
Springer-Verlag, 2004.

\bibitem{SHS2006}
M. Sofonea, W. Han, and M. Shillor, \emph{Analysis and Approximation
of Contact Problems with Adhesion or Damage}, Chapman \& Hall/CRC, New York, 2006.
        
\bibitem{SM2018}
M. Sofonea and S. Mig\'orski, \emph{Variational-Hemivariational Inequalities with Applications},
Chapman \& Hall/CRC Press, Boca Raton-London, 2018.

\bibitem{SPS17} 
M. Sofonea, F. P\v{a}trulescu, and Y. Souleiman, Analysis of a contact problem with wear and
unilateral constraint, \emph{Appl.\ Anal.} {\bf 95} (2017), 2590--2607.

\bibitem{Sz17}
P. Szafraniec, Analysis of an elasto-piezoelectric system of hemivariational inequalities with
thermal effects, \emph{Acta Mathematica Scientia} {\bf 37} (2017), 1048--1060.


\bibitem{VRS13} 
J. M. Viano, A. Rodriguez-Ar\'{o}s, and M. Sofonea, Asymptotic derivation of quasistatic frictional 
contact models with wear for elastic rods, \emph{J. Math.\ Anal.\ Appl.} {\bf 401} (2013), 641--653.

\end{thebibliography}
\end{document}